\documentclass[12pt, a4paper]{article}

\usepackage{indentfirst}
\setlength{\parindent}{1em}
\usepackage[reqno]{amsmath}
\usepackage{amssymb}
\usepackage{amsthm}
\usepackage{enumerate}
\usepackage{cases}
\usepackage[center]{titlesec}
\usepackage{fancyhdr}
\usepackage[colorlinks, 
linkcolor=blue,
anchorcolor=blue, 
citecolor=red]{hyperref}
\allowdisplaybreaks[1]

\numberwithin{equation}{section} 
\newtheorem{lem}{Lemma}[section]
\newtheorem{thm}[lem]{Theorem}

\newtheorem{cor}[lem]{Corollary}

\addtolength{\hoffset}{-1cm}
\addtolength{\textwidth}{2cm}

\title{Sobolev Inequality on Manifolds With Asymptotically Nonnegative Bakry-\'Emery Ricci Curvature\thanks{Supported by NSFC Grants No. 12171091, No. 11831005  and LMNS, Fudan.}}
\date{}

\begin{document}
	\author{Yuxin Dong, Hezi Lin, Lingen Lu}
	\maketitle
	\pagestyle{plain}

		\noindent\textbf{ABSTRACT} ~~ In this paper, inspired by \cite{Br2,Jo}, we prove a Sobolev inequality on manifolds with density and asymptotically nonnegative Bakry-\'Emery Ricci curvature.
		\\
		\\
		\noindent\textbf{2020 MR Subject Classification} ~~35R45; 53C21

\section{Introduction}

In 2019, S. Brendle \cite{Br1} proved a Sobolev inequality for submanifolds in Euclidean space. Moreover, he obtained a sharp isoperimetric inequality for compact minimal submanifolds in Euclidean space with codimension at most 2. In 2020, he generalized the results in \cite{Br1} to the case of manifolds with nonnegative curvature (\cite{Br2}). Recently, we extended \cite{Br2} to manifolds with asymptotically nonnegative curvature (\cite{DLL1}). In 2021, \mbox{F. Johne} \cite{Jo} generalized the results of \cite{Br2} to the case of manifolds with density and nonnegative Bakry-\'Emery Ricci curvature. In this note, we establish a Sobolev inequality in manifolds with density and asymptotically nonnegative Bakry-\'Emery Ricci curvature.

Let $(M,g,w\,d\mathrm{vol}_g)$ be a smooth complete noncompact $n$-dimensional Riemannian manifold with density, where $w$ is a smooth positive density function on $M$ and $d\mathrm{vol}_g$ is the Riemannian volume measure with respect to the metric $g$. As a generalization of Ricci curvature, the Bakry-\'Emery Ricci curvature \cite{BE} of $(M,g,w\,d\mathrm{vol}_g)$ is defined by 
\begin{equation}\label{BE}
	\mathrm{Ric}_\alpha^w=\mathrm{Ric}-D^2(\log w)-
	\frac{1}{\alpha}D\log w\otimes D\log w,
\end{equation}
where $\mathrm{Ric}$ denotes the Ricci curvature of $M$, $D$ is the Levi-Civita connection with respect to the metric $g$ and $\alpha>0$. If the density function $w$ is constant, the Bakry-\'Emery Ricci curvature $\mathrm{Ric}_\alpha^w$ reduces to the Ricci curvature.

Suppose $\lambda:[0,+\infty)\to[0,+\infty)$ is a nonnegative and nonincreasing continuous function satisfying
\begin{equation}\label{b0}
	b_0:=\int_0^{+\infty}s\lambda(s)~ds<+\infty,
\end{equation}
which implies
\begin{equation}\label{b1}
	b_1:=\int_0^{+\infty}\lambda(s)~ds<+\infty.
\end{equation}
A complete noncompact Riemannian manifold $(M,g,w\,d\mathrm{vol}_g)$ with density of dimension $n$ is said to have \textbf{asymptotically nonnegative Bakry-\'Emery Ricci curvature} if there exists a base point $o\in M$ such that
\begin{equation}\label{asy-BE}
	\mathrm{Ric}_\alpha^w(q)\geq-(n+\alpha-1)~\lambda(d(o,q)),\quad \forall q\in M,
\end{equation}
where $d$ is the distance function of $M$. In particular, $\lambda\equiv0$ in \eqref{asy-BE} corresponds to the case treated in \cite{Jo}.

Suppose $h:[0,T)\to\mathbb{R}$ is the unique solution of the initial value problem
\begin{equation}\label{h}
	\left\{
	\begin{aligned}
		&h''(t)=\lambda(t)h(t),\\
		&h(0)=0, h'(0)=1.
	\end{aligned}
	\right.
\end{equation}
By the theory of ordinary differential equations \cite{T}, the solution exists for all time, i.e. $T=\infty$. We remark that $h$ reduces to the radius function, if $\lambda=0$.
Similar to the work of F. Johne \cite{Jo}, we define the \textbf{$\alpha$-asymptotic volume ratio} $\mathcal{V}_\alpha$ of $(M,g,w\,d\mathrm{vol}_g)$ by
\begin{equation}\label{avr-be}
	\mathcal{V}_\alpha:=\lim_{r\to+\infty}\frac{\int_{B_r(o)}w}{(n+\alpha)\int_0^r h^{n+\alpha-1}(t)\ dt},
\end{equation}
where $o$ is the base point and $B_r(o)$ denotes the geodesic ball of radius $r$, i.e. $B_r(o)=\{q\in M:d(o,q)<r\}$. In Theorem \ref{thm-mmd}, we will show a comparison theorem for weighted volumes, to be more precise we will show
$$
\frac{\int_{B_r(o)}w}{(n+\alpha)\int_0^r h^{n+\alpha-1}(t)\ dt}
$$
is a nonincreasing function for $r\in(0,+\infty)$, so $\mathcal{V}_\alpha$ is well defined.

By combining the ABP-method in \cite{Br2, Jo} with some comparison theorems for ordinary differential equations, we establish a Sobolev inequality for a compact domain in manifolds with density, under the asymptotically nonnegative Bakry-\'Emery Ricci curvature as follows.

\begin{thm}\label{thm1.1}
	Let $(M,g,w\,d\mathrm{vol}_g)$ be a smooth complete noncompact $n$-dimensional Riemannian manifold of smooth density $w>0$ and asymptotically nonnegative Bakry-\'Emery Ricci curvature with respect to a base point $o\in M$. Let $\Omega$ be a compact domain in $M$ with boundary $\partial\Omega$, and $f$ be a positive smooth function on $\Omega$. Then
	$$
	\begin{aligned}
		&\int_{\partial\Omega}wf+\int_{\Omega}w|Df|+2b_1(n+\alpha-1)\int_\Omega wf\\
		&\geq(n+\alpha)\mathcal{V}_\alpha^{\frac{1}{n+\alpha}}
		\Big(\frac{1+b_0}{e^{2r_0b_1+b_0}}\Big)^{\frac{n+\alpha-1}{n+\alpha}}
		\Big(\int_\Omega wf^{\frac{n+\alpha}{n+\alpha-1}}\Big)^{\frac{n+\alpha-1}{n+\alpha}},
	\end{aligned}
	$$
	where $r_0=\max\{{d}(o,x)|x\in\Omega\}$, $\mathcal{V}_\alpha$ is the $\alpha$-asymptotic volume ratio of $M$ given by \eqref{avr-be}, $b_0,b_1$ are defined in \eqref{b0} and \eqref{b1}, respectively.
\end{thm}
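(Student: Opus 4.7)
The plan is to adapt the ABP-method used by Brendle in \cite{Br2} and Johne in \cite{Jo} to the weighted, asymptotically nonnegative setting, by grafting the $h$-comparison technique developed in \cite{DLL1} onto the Bakry-\'Emery framework of \cite{Jo}. The terms $\int_{\partial\Omega}wf$ and $\int_{\Omega}w|Df|$ on the left-hand side will arise exactly as in \cite{Jo}; the additional term $2b_1(n+\alpha-1)\int_\Omega wf$, the correction factors $(1+b_0)/e^{2r_0b_1+b_0}$, and the appearance of $\mathcal{V}_\alpha$ in place of the Euclidean constant will all be produced by the curvature perturbation $-\lambda(d(o,\cdot))$ in \eqref{asy-BE}.

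First I would introduce the Sobolev quotient
\[
\beta:=\frac{\int_{\partial\Omega}wf+\int_\Omega w|Df|+2b_1(n+\alpha-1)\int_\Omega wf}{\int_\Omega wf^{(n+\alpha)/(n+\alpha-1)}},
\]
so that the theorem is equivalent to the lower bound $\beta\geq(n+\alpha)\mathcal{V}_\alpha^{1/(n+\alpha)}\bigl((1+b_0)/e^{2r_0b_1+b_0}\bigr)^{(n+\alpha-1)/(n+\alpha)}$. Then I would solve a weighted Neumann problem for a potential $u$ on $\Omega$, as in \cite{Jo}, whose source and boundary data involve $f^{1/(n+\alpha-1)}$ and whose solvability via the weighted divergence theorem is exactly the identity defining $\beta$. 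Standard Schauder theory provides $u\in C^{2,\gamma}(\Omega)\cap C^{1,\gamma}(\overline{\Omega})$.

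Next, I would consider the transport $\Phi_t(x)=\exp_x(tDu(x))$ for $t\in[0,1]$ and define a contact set $A\subset\Omega\times[0,1]$ by the inf-convolution argument of \cite{Br2}, so that at each contact point one has $|Du|\leq f^{1/(n+\alpha-1)}$, the Hessian $D^2u$ is bounded below in the appropriate geodesic sense, and $\Phi_t(A_t)\subset B_{r_0+b_1}(o)$. On $A$, combining the AM-GM inequality applied to the eigenvalues of $D^2u$, Johne's Cauchy-Schwarz absorption of the term $\tfrac{1}{\alpha}|D\log w|^2$ (which effects the dimension lift $n\mapsto n+\alpha$), and the Jacobi field comparison along $s\mapsto\Phi_s(x)$, one derives a pointwise weighted Monge-Amp\`ere-type inequality. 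The Jacobi comparison is the decisive new ingredient: since $\mathrm{Ric}_\alpha^w\geq -(n+\alpha-1)\lambda(d(o,\cdot))$ along the geodesic, the associated Riccati ODE is $\lambda$-perturbed and must be controlled through $h'/h$ exactly as in \cite{DLL1}. Integrating this perturbation over a geodesic segment of length at most $r_0+b_1$, combined with the Gr\"onwall-type estimates for $h$ supplied by \eqref{h}, produces the correction factors $(1+b_0)/e^{2r_0b_1+b_0}$; the extra source term $2b_1(n+\alpha-1)$ in the PDE appears as the natural compensating term that absorbs the loss of the pointwise monotonicity identity that was available in \cite{Jo}.

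Finally, integrating the weighted Monge-Amp\`ere inequality over $A$ and using that $\bigcup_{t\in[0,1]}\Phi_t(A_t)$ covers a large geodesic ball $B_\rho(o)$, then dividing through by $(n+\alpha)\int_0^\rho h^{n+\alpha-1}(s)\,ds$ and letting $\rho\to+\infty$, the asymptotic volume ratio $\mathcal{V}_\alpha$ from \eqref{avr-be} emerges on the right and the desired lower bound on $\beta$ follows. I expect the main obstacle to be the sharp bookkeeping of these correction factors: merging Johne's Bakry-\'Emery dimension lift with our $h$-Riccati control so that the various $\lambda$-corrections package themselves exactly into $(1+b_0)/e^{2r_0b_1+b_0}$ and $2b_1(n+\alpha-1)$ requires a careful combination of the two Riccati analyses, and the final limit argument relies essentially on the weighted volume monotonicity of $\int_{B_r(o)}w/\bigl((n+\alpha)\int_0^rh^{n+\alpha-1}(s)\,ds\bigr)$ asserted just before the theorem statement.
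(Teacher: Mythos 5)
Your overall architecture --- normalizing the Sobolev quotient, solving a weighted Neumann problem, running an ABP-type transport map, controlling the Jacobian through a perturbed Riccati/Jacobi comparison against $h$, and extracting $\mathcal{V}_\alpha$ in a limit --- is the same as the paper's. But there is a genuine gap in how you set up the transport. You run $\Phi_t(x)=\exp_x(tDu(x))$ only for $t\in[0,1]$ and assert $\Phi_t(A_t)\subset B_{r_0+b_1}(o)$, yet in the final step you need $\bigcup_{t}\Phi_t(A_t)$ to cover balls $B_\rho(o)$ with $\rho\to+\infty$. These two statements are incompatible: a transport of bounded time emanating from the compact set $\Omega$ with $|Du|$ bounded has image in a fixed bounded set, so no asymptotic quantity can emerge from it. The asymptotic volume ratio enters only because the transport time itself is the large parameter: as in Lemma \ref{trans}, one shows $\{q\in M:d(x,q)<r\ \forall x\in\Omega\}\subset\Phi_r(A_r)$, sandwiches this set between $B_{r-r_0}(o)$ and $B_{r+r_0}(o)$, divides the integrated Jacobian estimate by $(n+\alpha)\int_0^r h^{n+\alpha-1}(t)\,dt$, and lets $r\to\infty$ (using the monotonicity of Theorem \ref{thm-mmd} to identify the limit with $\mathcal{V}_\alpha$).

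This misconfiguration propagates into your accounting of the correction constants, which you leave as "bookkeeping" but which is where the actual content lies. Since the geodesics $\bar{\gamma}(t)=\exp_{\bar{x}}(tDu(\bar{x}))$ have unbounded length as $r\to\infty$, the $\lambda$-perturbation is not integrated over a segment of length at most $r_0+b_1$; instead one bounds $\int_0^{\infty}\Lambda_{\bar{x}}(\tau)\,d\tau$ and $\int_0^{\infty}\tau\Lambda_{\bar{x}}(\tau)\,d\tau$ over the whole ray, using the triangle inequality $d(o,\bar{\gamma}(t))\geq|d(o,\bar{x})-t|Du(\bar{x})||$, the bound $|Du|<1$ on the contact set, the monotonicity of $\lambda$, and the finiteness of $b_0,b_1$. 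Concretely: the factor $e^{2r_0b_1+b_0}$ comes from $\psi_1(t)\leq te^{\frac{n+\alpha-1}{n+\alpha}\int_0^\infty\tau\Lambda_{\bar{x}}(\tau)\,d\tau}$; the term $2b_1(n+\alpha-1)$ comes from $\psi_2(r)/\psi_1(r)\leq\frac{2(n+\alpha-1)}{n+\alpha}b_1+\frac{1}{r}$, which is precisely what the extra zeroth-order term in the Neumann problem is designed to absorb via Lemma \ref{hessu}; and the factor $1+b_0$ comes from $\lim_{t\to\infty}h'(t)\geq 1+b_0$ when evaluating $\lim_{r\to\infty}r^{n+\alpha}/\bigl((n+\alpha)\int_0^r h^{n+\alpha-1}\bigr)$. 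None of these can be produced by a bounded-time transport, so the proposal as written does not close; replacing $t\in[0,1]$ by the unbounded parameter $r$ and redoing the perturbation estimates along infinite rays is the missing step.
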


Taking $f=1$ in Theorem \ref{thm1.1}, we obtain an isoperimetric inequality:

\begin{cor}\label{cor1.2}
	Let $(M,g,w\,d\mathrm{vol}_g)$ be a smooth complete noncompact $n$-dimensional Riemannian manifold of smooth density $w>0$ and asymptotically nonnegative Bakry-\'Emery Ricci curvature with respect to a base point $o\in M$. Then 
	$$
	\int_{\partial\Omega}w\geq\Big(
	(n+\alpha)\mathcal{V}_\alpha^{\frac{1}{n+\alpha}}
	\Big(\frac{1+b_0}{e^{2r_0b_1+b_0}}\Big)^{\frac{n+\alpha-1}{n+\alpha}}-
	2(n+\alpha-1)b_1\big(\int_\Omega w\big)^{\frac{1}{n+\alpha}}\Big)\big(\int_\Omega w\big)^{\frac{n+\alpha-1}{n+\alpha}},
	$$
	where $r_0=\max\{{d}(o,x)|x\in\Omega\}$, $\mathcal{V}_\alpha$ is the $\alpha$-asymptotic volume ratio of $M$ given by \eqref{avr-be}, $b_0,b_1$ are defined in \eqref{b0} and \eqref{b1}.
\end{cor}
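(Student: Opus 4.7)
The plan is to obtain Corollary \ref{cor1.2} as an immediate consequence of Theorem \ref{thm1.1} by specializing to the constant test function $f \equiv 1$, which is certainly smooth and positive on the compact domain $\Omega$. With this choice $|Df| \equiv 0$, so the gradient integral on the left-hand side of the Sobolev inequality disappears; moreover both $\int_\Omega w f$ and $\int_\Omega w f^{(n+\alpha)/(n+\alpha-1)}$ reduce to $\int_\Omega w$, while $\int_{\partial\Omega} w f$ becomes $\int_{\partial\Omega}w$.

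After these substitutions, Theorem \ref{thm1.1} reads
$$
\int_{\partial\Omega}w + 2b_1(n+\alpha-1)\int_\Omega w \;\geq\; (n+\alpha)\mathcal{V}_\alpha^{\frac{1}{n+\alpha}}\Big(\frac{1+b_0}{e^{2r_0 b_1+b_0}}\Big)^{\frac{n+\alpha-1}{n+\alpha}}\Big(\int_\Omega w\Big)^{\frac{n+\alpha-1}{n+\alpha}}.
$$
Next, I would transpose the linear correction to the right and invoke the trivial factorization
$$
\int_\Omega w = \Big(\int_\Omega w\Big)^{\frac{1}{n+\alpha}}\Big(\int_\Omega w\Big)^{\frac{n+\alpha-1}{n+\alpha}},
$$
so as to pull $(\int_\Omega w)^{(n+\alpha-1)/(n+\alpha)}$ out of both terms on the right-hand side. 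Collecting the common factor then yields exactly the inequality stated in Corollary \ref{cor1.2}.

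There is no real obstacle: the corollary is a purely algebraic consequence of Theorem \ref{thm1.1}, and the only thing that needs to be checked is the exponent bookkeeping, namely that the linear correction $2b_1(n+\alpha-1)\int_\Omega w$, once transported to the right-hand side, combines with the Sobolev bulk term into the advertised factor $\big((n+\alpha)\mathcal{V}_\alpha^{1/(n+\alpha)}(\tfrac{1+b_0}{e^{2r_0 b_1+b_0}})^{(n+\alpha-1)/(n+\alpha)} - 2(n+\alpha-1)b_1(\int_\Omega w)^{1/(n+\alpha)}\big)$ multiplying $(\int_\Omega w)^{(n+\alpha-1)/(n+\alpha)}$.
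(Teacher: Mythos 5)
Your proposal is correct and is exactly the paper's route: the corollary is obtained by taking $f\equiv 1$ in Theorem \ref{thm1.1}, transposing the term $2b_1(n+\alpha-1)\int_\Omega w$, and factoring out $\big(\int_\Omega w\big)^{\frac{n+\alpha-1}{n+\alpha}}$. The exponent bookkeeping checks out.
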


	When $w=1,b_0=b_1=\alpha=0$, Corollary \ref{cor1.2} was first given by V. Agostiniani, M. Fogagnolo, and L. Mazziari \cite{AFM} in dimension 3 and obtained by S. Brendle \cite{Br2} for any dimension. In recent years, the study of the isoperimetric problem in manifolds with density attracts much attention, see \cite{Mor2, MM, Cabre, FM}. For more about manifolds with density, we refer the reader to \cite{Mor1,M} and references therein.

\section{Preliminaries}
In this section, we give a proof of the Bishop-Gromov volume comparison theorem for complete noncompact Riemannian manifold with density and asymptotically nonnegative Bakry-\'Emery Ricci curvature.

The following lemma is an almost verbatim combination of Lemma 2.1 and Corollary 2.2 in \cite{PRS}. We should point out, however, that it deals with a different initial value case than \cite{PRS}.

\begin{lem}\label{comBE}
	Let $G$ be a continuous nonnegative function on $[0,+\infty)$ and let $g,\psi$ be solutions to the following problems
	\begin{equation}\label{p1}
	\left\{
	\begin{aligned}
		&g'+g^2\leq G,\quad t\in(0,+\infty),\\
		&g(t)= \frac{\beta}{t}+O(1),\text{ as }t\to0^+,
	\end{aligned}
	\right.
	\quad
	\left\{
	\begin{aligned}
		&\psi''= G\psi,\quad t\in(0,+\infty),\\
		&\psi(0)=0, \psi'(0)=1,
	\end{aligned}
	\right.
	\end{equation}
	where $0<\beta\leq 1$. Then we have
	$$
	g\leq\frac{\psi'}{\psi}\text{ on }(0,+\infty).
	$$
\end{lem}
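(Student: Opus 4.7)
The plan is to exploit the Riccati equation satisfied by $\varphi := \psi'/\psi$ and compare $g$ to $\varphi$ via a monotonicity argument with the integrating factor $\psi^2$. A direct calculation shows $\varphi' + \varphi^2 = \psi''/\psi = G$, so the Riccati inequality for $g$ becomes a differential inequality for $u := g - \varphi$ whose sign can be controlled through initial data.

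First I would verify that $\psi > 0$ on $(0,\infty)$. Since $\psi'(0)=1$ and $\psi''=G\psi\ge 0$ wherever $\psi\ge 0$, a standard continuation argument gives $\psi'\ge 1$ and thus $\psi(t)\ge t$ for all $t\ge 0$. In particular, $\varphi$ is well-defined and positive on $(0,\infty)$, and using $\psi(t)=t+O(t^3)$ near $t=0$ one obtains the expansion $\varphi(t)=1/t+O(t)$ as $t\to 0^+$.

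Next, subtracting $\varphi'+\varphi^2=G$ from $g'+g^2\le G$ yields
\[
u' + (g+\varphi)\,u \le 0.
\]
Writing $g+\varphi = 2\varphi + u$, this rearranges to
\[
u' + 2\varphi\, u \le -u^2 \le 0.
\]
Since $2\varphi = (\psi^2)'/\psi^2$, multiplication by the integrating factor $\psi^2$ gives
\[
(\psi^2 u)' \le -\psi^2 u^2 \le 0,
\]
so $\psi^2 u$ is nonincreasing on $(0,\infty)$. Combining the expansions $g(t)=\beta/t+O(1)$ and $\varphi(t)=1/t+O(t)$ gives $u(t)=(\beta-1)/t+O(1)$, hence
\[
\psi^2(t)\,u(t) = (\beta-1)\,t + O(t^2) \longrightarrow 0 \quad\text{as } t\to 0^+,
\]
where the hypothesis $\beta\le 1$ is exactly what ensures this limit is zero (rather than $+\infty$). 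Monotonicity then forces $\psi^2 u\le 0$ on $(0,\infty)$, and since $\psi>0$ we conclude $u\le 0$, i.e.\ $g\le \psi'/\psi$.

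The only delicate point is the matching of initial data at $t=0^+$: the singular term $\beta/t$ in $g$ and the singular term $1/t$ in $\varphi$ must combine in such a way that $\psi^2 u$ has a finite nonpositive limit at the origin. The assumption $0<\beta\le 1$ is tailored precisely for this, and once the limit is handled the rest of the argument is a routine Riccati comparison.
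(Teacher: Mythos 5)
Your argument is correct, and it is a genuinely different implementation of the comparison than the one in the paper. The paper first \emph{linearizes} $g$: it introduces $\phi(t)=t^{\beta}e^{\int_0^t(g-\beta/\tau)\,d\tau}$, so that $g=\phi'/\phi$ and $\phi''\leq G\phi$, and then runs a Sturm-type Wronskian argument, showing $(\phi'\psi-\psi'\phi)'\leq 0$ with vanishing limit at $0^+$. You instead stay at the Riccati level: with $u=g-\psi'/\psi$ you derive $(\psi^2u)'\leq-\psi^2u^2\leq0$ and kill the boundary term at the origin. The two monotone quantities are closely related --- the paper's is $\phi\cdot(g\psi-\psi')$ while yours is $\psi\cdot(g\psi-\psi')$ --- but yours dispenses entirely with the auxiliary function $\phi$ and its slightly delicate asymptotics near $t=0$, at the cost of having to expand $\psi'/\psi=1/t+O(t)$ yourself; both boundary analyses are of comparable difficulty, and every step of yours (the expansion $\psi(t)=t+O(t^3)$ for continuous $G$, the identity $(\psi^2u)'=\psi^2(u'+2\varphi u)$, and the deduction $\psi^2u\leq0$ from monotonicity plus the zero limit) checks out. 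One small inaccuracy: your parenthetical claim that $\beta\leq1$ is ``exactly what ensures this limit is zero (rather than $+\infty$)'' is not right --- $(\beta-1)t+O(t^2)\to0$ for \emph{every} $\beta$, so your proof never actually uses $\beta\leq1$; that hypothesis is what makes the Riccati inequality compatible with the prescribed singularity of $g$ at the origin (for $\beta>1$ one has $g'+g^2\sim\beta(\beta-1)/t^2\to+\infty$, so the class of admissible $g$ would essentially be empty), and in the paper's version it also governs the behaviour of $\phi$ near $0$. This does not affect the validity of your proof.
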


\begin{proof}
	Observe that the initial conditions imply $\psi\ge0$. Then the Fundamental Theorem of Calculus implies $\psi(t)\ge t$ and $\psi'(t)\ge1$. Let $\phi(t)=t^\beta e^{\int_0^t(g-\frac{\beta}{\tau})\ d\tau}>0$ for $t\in(0,+\infty)$. Similar to the proof of Lemma 2.1 and Corollary 2.2 in \cite{PRS}, it is easy to show that
	\begin{equation}\label{p2}
	\begin{aligned}
		&g=\frac{\phi'}{\phi}, \quad\phi''\leq G\phi,\\
		&\phi(t)=t^\beta(1+O(1)), 
		\text{ as }t\to0^+,\\
		& \lim_{t\to0^+}\frac{\psi(t)}{t}=\frac{\psi'(0)}{1}=1,\\
		&\lim_{t\to0^+}(\phi'\psi-\phi\psi')=
		\lim_{t\to0^+}(g\phi\psi-\phi\psi')=0.
	\end{aligned}
	\end{equation}
	Using \eqref{p1} and \eqref{p2}, we conclude that
	$$
	(\phi'\psi-\psi'\phi)'=\phi''\psi-\phi\psi''\leq {G}(t)\psi\phi-
	{G}(t)\psi\phi=0
	$$
    and
	$$
	\phi'\psi-\psi'\phi\leq 0\text{ on }(0,+\infty).
	$$
	Thus,
	$$
	g=\frac{\phi'}{\phi}\leq\frac{\psi'}{\psi}\text{ on }(0,+\infty).
	$$
\end{proof}

The proof of the following theorem is a close adaption of Theorem A.1 in F. Johne \cite{Jo}.

\begin{thm}\label{thm-mmd}
	Let $(M,g,w\,d\mathrm{vol}_g)$ be a smooth complete noncompact $n$-dimensional Riemannian manifold of smooth density $w>0$ and asymptotically nonnegative Bakery-\'Emery Ricci curvature with respect to a base point $o\in M$. Then the function
	$$
	r\mapsto \frac{\int_{B_r(o)}w}{(n+\alpha)\int_0^r h^{n+\alpha-1}(t)\ dt}
	$$
	is nonincreasing. 
\end{thm}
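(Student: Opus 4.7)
My plan is a weighted Bishop--Gromov argument. Work in polar normal coordinates $(r,\theta)$ about $o$, write $d\mathrm{vol}_g = J(r,\theta)\,dr\,d\theta$ along a minimizing radial geodesic, and set $A(r,\theta):=w(r,\theta)J(r,\theta)$, so that
$$\int_{B_r(o)}w=\int_{S^{n-1}}\int_0^{\min(r,c(\theta))}A(s,\theta)\,ds\,d\theta,$$
where $c(\theta)$ is the distance to the cut locus in the direction $\theta$. The target monotonicity will follow once I prove the pointwise claim that $A(r,\theta)/h^{n+\alpha-1}(r)$ is nonincreasing in $r$ for each $\theta$ (extended by zero beyond the cut locus), using the elementary fact that if $p(r)/q(r)$ is nonincreasing with $q>0$ then so is $\int_0^r p\big/\int_0^r q$, and then integrating over $S^{n-1}$.

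The heart of the proof is establishing a scalar Riccati inequality of the form handled by Lemma \ref{comBE} for the weighted logarithmic derivative. Set $f:=-\log w$, so $H_f:=\partial_r\log A=H-\partial_r f$ is the weighted mean curvature of the geodesic sphere, where $H=\partial_r\log J$ is the usual mean curvature and satisfies the Bochner--Riccati inequality $\partial_r H+\tfrac{H^2}{n-1}\leq-\mathrm{Ric}(\partial_r,\partial_r)$. Using the identity $\mathrm{Ric}_\alpha^w(\partial_r,\partial_r)=\mathrm{Ric}(\partial_r,\partial_r)+\partial_r^2 f-\tfrac{(\partial_r f)^2}{\alpha}$ together with the elementary Cauchy--Schwarz bound
$$\frac{(H-\partial_r f)^2}{n+\alpha-1}\leq\frac{H^2}{n-1}+\frac{(\partial_r f)^2}{\alpha},$$
one obtains $\partial_r H_f+\tfrac{H_f^2}{n+\alpha-1}\leq-\mathrm{Ric}_\alpha^w(\partial_r,\partial_r)\leq(n+\alpha-1)\lambda(r)$. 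Dividing by $n+\alpha-1$ and writing $g:=H_f/(n+\alpha-1)$ yields $g'+g^2\leq\lambda$; the classical short-range asymptotics $H\sim(n-1)/r$ and $\partial_r f=O(1)$ near $o$ give $g(r)=\tfrac{\beta}{r}+O(1)$ with $\beta=\tfrac{n-1}{n+\alpha-1}\in(0,1]$.

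Applying Lemma \ref{comBE} with $G=\lambda$ now gives $g\leq h'/h$ on $(0,c(\theta))$, equivalently $\partial_r\log\bigl(A/h^{n+\alpha-1}\bigr)\leq 0$. Hence $A(r,\theta)/h^{n+\alpha-1}(r)$ is nonincreasing along each radial direction, and extending it by zero past the cut locus preserves the monotonicity. The elementary integration argument outlined in the first paragraph then delivers the monotonicity of $r\mapsto\int_{B_r(o)}w\,\big/\,\bigl((n+\alpha)\int_0^r h^{n+\alpha-1}(t)\,dt\bigr)$. The main obstacle is the Cauchy--Schwarz step in the second paragraph: it is the precise algebraic identity that converts an $n$-dimensional weighted Ricci bound into an effective $(n+\alpha)$-dimensional unweighted Riccati inequality matched to the comparison ODE $h''=\lambda h$; cut-locus issues are routine and handled by the zero-extension.
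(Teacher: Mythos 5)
Your proposal is correct and follows essentially the same route as the paper: the identical Cauchy--Schwarz/completing-the-square step converting the Riccati inequality for $H$ into one for the weighted mean curvature $H+\partial_r\log w$ with effective dimension $n+\alpha$, then Lemma \ref{comBE} with $G=\lambda$ and the standard ratio-of-integrals lemma. The only cosmetic difference is that you run the argument pointwise on the area element $wJ$ in polar coordinates with zero-extension past the cut locus, whereas the paper first integrates over the geodesic spheres $S_t(o)\cap D_o$; both are standard and equivalent here.
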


\begin{proof}
	Fix the base point $o\in M$ and $r>0$, let $D_o=M\setminus \mathrm{cut(o)}$ be the domain of the normal geodesic coordinates centered at $o$. We define $B_r(o)=\{q\in M:d(o,q)<r\}$ and its boundary by $S_r(o)=\partial B_r(o)$. Denote the second fundamental form of the hypersurface $S_r(o)\cap D_o$ by $B$ and  the mean curvature of the geodesic sphere with an inward pointing normal vector by $H$.
	
	Let $\gamma(t):=\exp_o(tv),t\in[0,r]$ be a normal geodesic such that $\gamma(t)\in S_t(o)\cap D_o$. We consider the variation set of hypersurfaces that have a constant distance from $N$. By (1.6) in \cite{Pe}, it is easy to know that
	$$
	\frac{d}{dt}H=-|B|^2-\mathrm{Ric}(\gamma',\gamma'),
	\quad t\in(0,r),
	$$
	provided $\gamma(t)\in S_t(o)\cap D_o$. By the definition of Bakry-\'Emery Ricci curvature \eqref{BE}, we deduce that
	\begin{align}
		&\nonumber\frac{d}{dt}[H+ \langle D\log w,{\gamma}'\rangle]\\
		&\nonumber=-\frac{1}{\alpha}\langle D\log w,{\gamma}' \rangle^2- \mathrm{Ric}^w_\alpha(\gamma',\gamma')-|B|^2\\
		&\nonumber\leq-\frac{1}{n-1}H^2-\frac{1}{\alpha}\langle D\log w,{\gamma}' \rangle^2
		- \mathrm{Ric}^w_\alpha(\gamma',\gamma')\\
		&\nonumber=-\frac{1}{n+\alpha-1}(H+ \langle D\log w,{\gamma}'\rangle)^2
		-\frac{n-1}{\alpha(n+\alpha-1)}\Big(\frac{\alpha}{n-1}H-
		\langle D\log w,{\gamma}'\rangle\Big)^2
		-\mathrm{Ric}^w_\alpha(\gamma',\gamma')\\
		&\leq-\frac{1}{n+\alpha-1}(H+ \langle D\log w,{\gamma}' \rangle)^2
		-\mathrm{Ric}^w_\alpha(\gamma',\gamma').
			\label{p3}
	\end{align}
	Set $g_o=\frac{1}{n+\alpha-1}[H+ \langle D\log w,{\gamma}'\rangle],t\in(0,r)$. Using $\lim\limits_{t\to 0^+}tH(t)=n-1$ and the smoothness of the density $w$, by \eqref{asy-BE} and \eqref{p3}, we can find
	$$
	\left\{
	\begin{aligned}
		&g_o'+g_o^2\leq \lambda,\quad t\in(0,r),\\
		&g_o(t)=\frac{n-1}{(n+\alpha-1)t}+O(1), 
		\text{ as }t\to0^+.
	\end{aligned}
	\right.
	$$
	Note that  $0<\frac{n-1}{n+\alpha-1}\leq 1$, from \eqref{h} and Lemma \ref{comBE}, it follows
	$$
	g_o\leq\frac{h'}{h},
	$$
	that is,
	$$
	H+ \langle D(\log w),{\gamma}'\rangle\leq(n+\alpha-1)\frac{h'}{h}.
	$$
	By the first variation formula for the manifold with density, we obtain
	$$
	\begin{aligned}
		\frac{d}{dt}\Big(\int_{S_t(o)}w\Big)&=\frac{d}{dt}
		\Big(\int_{S_t(o)\cap D_o}w\Big)
		=\int_{S_t(o)\cap D_o}(H+\langle\nu,D\log w\rangle)w
		\\
		&\leq(n+\alpha-1)\frac{h'}{h}\int_{S_t(o)\cap D_o}w
		=(n+\alpha-1)\frac{h'}{h}\int_{S_t(o)}w, 
	\end{aligned}
	$$
	where $\nu$ is the unit outward vector field along $S_t(o)\cap D_o$. This implies that
	$$
	t\mapsto\frac{\int_{S_t(o)}w}{h^{n+\alpha-1}(t)}
	$$
	is a nonincreasing function. Following Lemma 2.2 in \cite{Zhu}, we derive that
	$$
	\int_{B_r(o)}w=\int_0^r\frac{\int_{S_t(o)}w}{h^{n+\alpha-1}}h^{n+\alpha-1}~dt\ge
	\frac{\int_{S_r(o)}w}{h^{n+\alpha-1}(r)}\int_0^rh^{n+\alpha-1}~dt,
	$$
	which implies
	$$
	\frac{d}{dr}\Big(\frac{\int_{B_r(o)}w}{\int_0^rh^{n+\alpha-1}~dt}\Big)=
	\frac{h^{n+\alpha-1}(r)}{(\int_0^rh^{n+\alpha-1}~dt)}\Big(\frac{\int_{S_r(o)}w}{h^{n+\alpha-1}(r)}-\frac{\int_{B_r(o)}w}{\int_0^rh^{n+\alpha-1}~dt}\Big)\le0.
	$$
	This proves the assertion.
\end{proof}

\section{Proof of Theorem 1.1}
Let $(M,g,w\,d\mathrm{vol}_g)$ be a complete noncompact $n$-dimensional Riemannian manifold with smooth density $w>0$ and asymptotically nonnegative Bakry-\'Emery Ricci curvature with respect to a base point $o\in M$. Let $\Omega$ be a compact and connected domain in $M$ with smooth boundary $\partial\Omega$, and $f$ be a smooth positive function on $\Omega$. 

We only need to prove Theorem \ref{thm1.1} in the case that $\Omega$ is connected. By scaling, we may assume that
\begin{equation}\label{scale}
\int_{\partial\Omega}wf+\int_{\Omega}w|Df|+
2(n+\alpha-1)b_1\int_\Omega wf=(n+\alpha)
\int_{\Omega}wf^{\frac{n+\alpha}{n+\alpha-1}}.
\end{equation}
Due to \eqref{scale} and the connectedness of $\Omega$, we can find a solution to the following Neumann problem
\begin{equation}
	\left\{
	\begin{aligned}
		&\mathrm{div}(wfDu)=(n+\alpha)wf^{\frac{n+\alpha}{n+\alpha-1}}-w|Df|-2(n+\alpha-1)b_1wf& \text{ in }\Omega,\\
		\\
		&\langle Du,\nu\rangle=1,
		& \text{ on }\partial\Omega,
	\end{aligned}
	\right.\label{neumann}
\end{equation}
where $\nu$ is the outward unit normal vector field of $\partial\Omega$. By standard elliptic regularity theory (see Theorem 6.31 in \cite{GT}), we know that $u\in C^{2,\gamma}$ for each $0<\gamma<1$.

Following the notions in \cite{Br2}, we define
$$
U:=\{x\in\Omega\setminus\partial\Omega:|Du(x)|<1\}.
$$
For any $r>0$, we denote $A_r$ by
$$
\{\bar{x}\in U: ru(x)+\frac{1}{2}{d}
(x,\exp_{\bar{x}}(rD u(\bar{x})))^2\geq
ru(\bar{x})+\frac{1}{2}r^2|D u(\bar{x})|^2,
\forall x\in\Omega\}
$$
and the transport map $\Phi_r:\Omega\to M$ by
$$
\Phi_r(x)=\exp_x(rD u(x)), \quad\forall x\in\Omega.
$$
Using the regularity of the solution $u$ of the Neumann problem, we know that the transport map is of class $C^{1,\gamma},0<\gamma<1$.

We obtain the following lemma similar to Lemma 2.1 in \cite{Br2}.

\begin{lem}\label{hessu}
	Assume that $x\in U$. Then we have
	$$
	w\Delta u+\langle Dw,Du\rangle+2(n+\alpha-1)b_1w\leq (n+\alpha)wf^{\frac{1}{n+\alpha-1}}.
	$$
\end{lem}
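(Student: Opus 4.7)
The plan is to simply unpack the Neumann PDE \eqref{neumann} and use the hypothesis $x \in U$, i.e.\ $|Du(x)|<1$. There is no genuine geometric content at this stage; the lemma is an algebraic consequence of the equation together with a Cauchy--Schwarz bound.

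First I would expand the left-hand side of \eqref{neumann}:
$$
\mathrm{div}(wf\, Du) = wf\,\Delta u + w\langle Df, Du\rangle + f\langle Dw, Du\rangle.
$$
Substituting into \eqref{neumann} and dividing through by $f>0$, I get
$$
w\,\Delta u + \langle Dw, Du\rangle + 2(n+\alpha-1)b_1 w
= (n+\alpha)w f^{\frac{1}{n+\alpha-1}} - \frac{w|Df|}{f} - \frac{w}{f}\langle Df, Du\rangle.
$$

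Next I would estimate the last two terms. Since $w>0$ and $f>0$, it suffices to show that $|Df| + \langle Df, Du\rangle \ge 0$. By Cauchy--Schwarz,
$$
\langle Df, Du\rangle \ge -|Df|\,|Du|,
$$
and since $x \in U$ gives $|Du(x)|<1$, we conclude
$$
|Df| + \langle Df, Du\rangle \ge |Df|(1 - |Du|) \ge 0.
$$
Hence the two negative terms on the right-hand side sum to a nonpositive quantity, and the desired inequality follows at once.

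The main (and really only) step is recognizing that the hypothesis $|Du|<1$ is exactly what is needed to absorb the extra gradient term $-w|Df| - w\langle Df, Du\rangle/f$ that appears after dividing by $f$; no obstacle beyond this routine manipulation is anticipated.
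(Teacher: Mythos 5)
Your proof is correct and is essentially the same as the paper's: expand $\mathrm{div}(wfDu)$, plug in the Neumann equation, and use Cauchy--Schwarz together with $|Du|<1$ to absorb the term $w|Df|+w\langle Df,Du\rangle$. The only cosmetic difference is that you divide by $f$ before estimating, while the paper keeps the factor of $f$ and divides at the end.
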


\begin{proof}
	Using the Cauchy-Schwarz inequality and the property that $|Du|<1$ for $x\in U$, we get
	$$
	-\langle Df,Du\rangle\leq|Df|.
	$$
	In terms of \eqref{neumann}, we derive that
	$$
	\begin{aligned}
		f(w\Delta u+\langle Dw,Du \rangle+2(n+\alpha-1)b_1w)&=(n+\alpha)wf^{\frac{n+\alpha}{n+\alpha-1}}
		-w(|Df|+\langle Df,Du\rangle)\\
		&\leq(n+\alpha)wf^{\frac{n+\alpha}{n+\alpha-1}}.
	\end{aligned}
	$$
	This proves the assertion.
\end{proof}

The proofs of the following three lemmas are identical to those of Lemmas 2.2-2.4 in \cite{Br2} without any change for the case of asymptotically nonnegative Bakry-\'Emery Ricci curvature. So we omit them here.

\begin{lem}[cf. S. Brendle, Lemma 2.2 in \cite{Br2}]\label{trans}
	The set
	\[
	\{q\in M: d(x,q)<r,  \forall x\in\Omega\}
	\]
	is contained in $\Phi_r(A_r)$.
\end{lem}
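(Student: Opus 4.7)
The plan is to follow the classical ABP-type scheme from \cite{Br2}. Given any $q$ in the set $\{q \in M : d(x,q) < r,\ \forall x \in \Omega\}$, I would introduce the continuous function
$$
\phi_q(x) := r u(x) + \tfrac{1}{2} d(x, q)^2, \qquad x \in \Omega,
$$
which attains a minimum at some $\bar{x} \in \Omega$ by compactness of $\Omega$. The goal is then to show in sequence that (i) $\bar{x}$ is an interior point of $\Omega$, (ii) $\bar{x} \in U$ and $\Phi_r(\bar{x}) = q$, and (iii) $\bar{x} \in A_r$.

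For (i), I would argue by contradiction: if $\bar{x} \in \partial\Omega$, then since $d(\cdot, q)$ is $1$-Lipschitz, an inward variation of $\bar{x}$ of size $\varepsilon$ along $-\nu$ changes $\tfrac{1}{2} d(\cdot, q)^2$ by at most $\varepsilon\, d(\bar{x}, q) + o(\varepsilon)$, while the Neumann condition $\langle Du, \nu \rangle = 1$ from \eqref{neumann} forces $r u$ to decrease by $r \varepsilon + o(\varepsilon)$. The net change in $\phi_q$ is therefore at most $\varepsilon (d(\bar{x}, q) - r) + o(\varepsilon)$, which is strictly negative for small $\varepsilon$ because $d(\bar{x}, q) < r$ by hypothesis; this contradicts the minimality of $\bar{x}$. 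For (ii), assuming temporarily that $q$ is not in the cut locus of $\bar{x}$, the first-order condition $D\phi_q(\bar{x}) = 0$ together with the standard identity $D\bigl(\tfrac{1}{2} d(\cdot, q)^2\bigr)(\bar{x}) = -\exp_{\bar{x}}^{-1}(q)$ gives $r Du(\bar{x}) = \exp_{\bar{x}}^{-1}(q)$. Hence $\Phi_r(\bar{x}) = \exp_{\bar{x}}(r Du(\bar{x})) = q$ and, taking norms, $r|Du(\bar{x})| = d(\bar{x}, q) < r$, so $\bar{x} \in U$. For (iii), substituting $d(\bar{x}, q)^2 = r^2 |Du(\bar{x})|^2$ into the minimality inequality $\phi_q(x) \geq \phi_q(\bar{x})$ recovers exactly the defining inequality of $A_r$ with $\exp_{\bar{x}}(rDu(\bar{x})) = q$, hence $\bar{x} \in A_r$.

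The main obstacle is the possible non-smoothness of $d(\cdot, q)^2$ at cut points of $q$. The standard Calabi-type trick resolves this: along any fixed minimizing unit-speed geodesic from $\bar{x}$ to $q$, one constructs a smooth local upper support for $\tfrac{1}{2} d(\cdot, q)^2$ at $\bar{x}$ with the same first-order data as in the regular case, and applies the first-order argument to $r u$ plus this support function. This recovers the identity $r Du(\bar{x}) = \exp_{\bar{x}}^{-1}(q)$ for the chosen initial direction and the equality $d(\bar{x}, q) = r |Du(\bar{x})|$, so all conclusions go through without any additional regularity assumption on the squared distance.
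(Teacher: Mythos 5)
Your argument is correct and is essentially the proof of Lemma 2.2 in \cite{Br2}, which is exactly what the paper invokes here (the authors omit the proof and cite Brendle verbatim). All three steps --- ruling out a boundary minimum via the Neumann condition $\langle Du,\nu\rangle=1$ and $d(\bar{x},q)<r$, deducing $rDu(\bar{x})=\exp_{\bar{x}}^{-1}(q)$ and $|Du(\bar{x})|<1$ from the first-order condition, and reading off the defining inequality of $A_r$ from minimality --- together with the Calabi support-function trick at cut points, match the cited argument.
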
	

\begin{lem}[cf. S. Brendle, Lemma 2.3 in \cite{Br2}]\label{d-mat}
	Assume that $\bar{x}\in A_r$, and let 
	$\bar{\gamma}(t):=\exp_{\bar{x}}(tDu(\bar{x}))$ for all $t\in[0,r]$. 
	If $Z$ is a smooth vector field along $\bar{\gamma}$ satisfying
	$Z(r)=0$, then
	\[
	(D^2u)(Z(0),Z(0))+\int_0^r\big(|D_tZ(t)|^2-R(\bar{\gamma}'(t),Z(t),\bar{\gamma}'(t),Z(t))\big)\ dt\geq0,
	\]	
	where $R$ is the Riemannian curvature tensor of $M$.
\end{lem}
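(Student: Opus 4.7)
The plan is to extract the inequality from the fact that membership of $\bar x$ in $A_r$ is equivalent to saying that $\bar x$ is an interior minimizer over $\Omega$ of
$$\Psi(x):=r\,u(x)+\tfrac12\, d(x,q)^2,\qquad q:=\bar\gamma(r).$$
Indeed, taking $x=\bar x$ in the defining inequality of $A_r$ and combining with the trivial estimate $d(\bar x,q)\le r|Du(\bar x)|$ (since $\bar\gamma$ is a curve of length $r|Du(\bar x)|$ from $\bar x$ to $q$) forces $d(\bar x,q)=r|Du(\bar x)|$ and $\Psi(\bar x)=ru(\bar x)+\tfrac12 r^2|Du(\bar x)|^2$, after which the defining inequality reads $\Psi(x)\ge\Psi(\bar x)$ for every $x\in\Omega$. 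Because $\bar x\in U\subset\Omega\setminus\partial\Omega$, any sufficiently small smooth variation $x(s)$ through $\bar x$ stays in $\Omega$, forcing $\Psi''(0)\ge 0$ for every such variation.

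Given the prescribed $Z$ along $\bar\gamma$ with $Z(r)=0$, I would construct a smooth two-parameter variation $\sigma:(-\epsilon,\epsilon)\times[0,r]\to M$ with $\sigma(0,\cdot)=\bar\gamma$, variation field $\partial_s\sigma(0,t)=Z(t)$, and pinned endpoint $\sigma(s,r)\equiv q$; then set $x(s):=\sigma(s,0)$, so that $x'(0)=Z(0)$. Define
$$E(s):=\int_0^r |\partial_t\sigma(s,t)|^2\,dt,\qquad \Phi(s):=r\,u(x(s))+\tfrac{r}{2}E(s).$$
Cauchy--Schwarz applied to $\sigma(s,\cdot)$ as a curve from $x(s)$ to $q$ gives $\tfrac12 d(x(s),q)^2\le\tfrac{r}{2}E(s)$, with equality at $s=0$ since $\bar\gamma$ is a constant-speed geodesic realizing $d(\bar x,q)$; hence $\Psi\le\Phi$ near $0$ with $\Psi(0)=\Phi(0)$. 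Moreover the first variation of energy (using $Z(r)=0$ and $\bar\gamma'(0)=Du(\bar x)$) and the first variation of distance both yield $-r\langle Du(\bar x),Z(0)\rangle$, so $\Psi'(0)=\Phi'(0)$; matching values and first derivatives upgrade the pointwise bound to $\Psi''(0)\le\Phi''(0)$, and hence $\Phi''(0)\ge 0$.

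Expanding $\Phi''(0)$, the $ru$-summand contributes $rD^2u(Z(0),Z(0))+r\langle Du(\bar x),A\rangle$ where $A:=D_s x'|_{s=0}$ is the covariant acceleration of $x(\cdot)$ at $0$, while Synge's second variation of energy along the geodesic $\bar\gamma$ reads
$$\tfrac{r}{2}E''(0)=r\!\int_0^r\!\big(|D_tZ|^2-R(\bar\gamma',Z,\bar\gamma',Z)\big)\,dt\;-\;r\langle Du(\bar x),A\rangle,$$
the boundary contribution at $t=r$ vanishing because $\sigma(s,r)\equiv q$. The two acceleration terms cancel and division by $r>0$ yields the claim. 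The main technical point is not either of the two classical variation formulas, which are routine, but engineering the map $\sigma$ so that its variation field is exactly the prescribed $Z$ while keeping $\sigma(s,r)\equiv q$, and recognizing that the matching of first derivatives $\Psi'(0)=\Phi'(0)$ is precisely what legitimizes passing from the pointwise bound $\Psi\le\Phi$ to a comparison of second derivatives.
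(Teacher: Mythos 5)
The paper gives no proof of this lemma; it refers to Lemma 2.3 of \cite{Br2}, and your argument is essentially Brendle's: read membership in $A_r$ as minimality of $x\mapsto ru(x)+\tfrac12 d(x,q)^2$ at the interior point $\bar x$, replace the squared distance by the smooth energy upper bound, and apply the second variation of energy with the endpoint at $t=r$ pinned. Your variational bookkeeping is correct: the boundary term at $t=r$ vanishes because $\sigma(s,r)\equiv q$, and the two acceleration terms $\pm r\langle Du(\bar x),A\rangle$ cancel, leaving exactly the claimed quantity times $r$.

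One step as written is shaky. You invoke $\Psi''(0)\ge 0$ and then the comparison $\Psi''(0)\le\Phi''(0)$; both presuppose that $s\mapsto\Psi(x(s))=ru(x(s))+\tfrac12 d(x(s),q)^2$ is twice differentiable at $s=0$. It need not be: $q=\bar\gamma(r)$ may be conjugate to $\bar x$ along $\bar\gamma$ (Lemma \ref{jacovani} excludes conjugate points only for $\tau\in(0,r)$, not at $\tau=r$), or may lie in the cut locus of $\bar x$, and then $d(\cdot,q)^2$ is not $C^2$ near $\bar x$. The detour is unnecessary: from $\Phi(s)\ge\Psi(x(s))\ge\Psi(\bar x)=\Phi(0)$ you get directly that the smooth function $\Phi$ attains a minimum at $s=0$, hence $\Phi''(0)\ge 0$, which is all the remainder of your argument uses. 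With that one replacement the proof is complete and coincides with the one the paper cites.
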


\begin{lem}[cf. S. Brendle, Lemma 2.2 in \cite{Br2}]\label{jacovani}
	Assume that $\bar{x}\in A_r$, and let 
	$\bar{\gamma}(t):=\exp_{\bar{x}}(tD u(\bar{x}))$ for all $t\in[0,r]$. Moreover, 
	let $\{e_1,\dots,e_n\}$ be an orthonormal basis of $T_{\bar{x}}M$. Suppose that $W$ 
	is a Jacobi field along $\bar{\gamma}$ satisfying 	
	\[
	\langle D_tW(0),e_j\rangle=(D^2u)(W(0),e_j),\quad 1\leq j\leq n.
	\]
	If $W(\tau)=0$ for some $\tau\in(0,r)$, then $W$ vanishes identically.
\end{lem}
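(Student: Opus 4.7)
The plan is to argue by contradiction, treating the vanishing set of $W$ variationally against Lemma \ref{d-mat}. Suppose $W$ is a Jacobi field along $\bar{\gamma}$ satisfying the hypothesis $\langle D_tW(0),e_j\rangle=(D^2u)(W(0),e_j)$ and $W(\tau)=0$ for some $\tau\in(0,r)$ but $W\not\equiv 0$. Because the Jacobi equation is a linear second-order ODE, the simultaneous vanishing of $W(\tau)$ and $D_tW(\tau)$ would force $W\equiv 0$, so under our assumption $D_tW(\tau)\neq 0$; this is the assertion I will ultimately contradict.

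\smallskip

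First I would produce a test vector field that saturates the inequality of Lemma \ref{d-mat}. Define
\[
Z(t)=\begin{cases} W(t), & t\in[0,\tau],\\ 0, & t\in[\tau,r],\end{cases}
\]
which is Lipschitz (continuous since $W(\tau)=0$) with $Z(r)=0$, but whose covariant derivative has a jump of size $-D_tW(\tau)$ at $t=\tau$. Integration by parts on $[0,\tau]$ together with the Jacobi equation for $W$ gives
\[
\int_0^r\bigl(|D_tZ|^2-R(\bar{\gamma}',Z,\bar{\gamma}',Z)\bigr)\,dt=-\langle D_tW(0),W(0)\rangle,
\]
and the boundary hypothesis then yields $F(Z)=0$, where
\[
F(V):=(D^2u)(V(0),V(0))+\int_0^r\bigl(|D_tV|^2-R(\bar{\gamma}',V,\bar{\gamma}',V)\bigr)\,dt
\]
is the functional of Lemma \ref{d-mat}. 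Thus $Z$ achieves equality in the Lipschitz extension of that lemma.

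\smallskip

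Next I mollify $Z$ along $\bar{\gamma}$ (using parallel transport to realize the smoothing in a fixed Euclidean space) to produce a sequence of smooth vector fields $Z_\epsilon$ with $Z_\epsilon(r)=0$, $Z_\epsilon\to Z$ in $H^1$, and $F(Z_\epsilon)\to F(Z)=0$. For any smooth vector field $Y$ along $\bar{\gamma}$ with $Y(r)=0$ and any $s\in\mathbb{R}$, Lemma \ref{d-mat} applied to $Z_\epsilon+sY$ gives
\[
0\le F(Z_\epsilon+sY)=F(Z_\epsilon)+2s\,Q(Z_\epsilon,Y)+s^2F(Y),
\]
where $Q$ is the symmetric bilinear form polarizing $F$. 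An analogous integration by parts, now with $Y$ as the test function (again using the Jacobi equation and the boundary hypothesis to cancel both the curvature cross-term and the $t=0$ contribution), collapses the cross term in the limit to $Q(Z,Y)=\langle D_tW(\tau),Y(\tau)\rangle$. For the limiting quadratic $2s\langle D_tW(\tau),Y(\tau)\rangle+s^2F(Y)$ to be nonnegative for every real $s$, the linear coefficient must vanish; choosing $Y$ as a parallel field cut off by a bump centered at $\tau$ makes $Y(\tau)$ realize any prescribed element of $T_{\bar{\gamma}(\tau)}M$, so $D_tW(\tau)=0$, contradicting the opening observation. The main subtlety I anticipate is the sign bookkeeping in the two integration-by-parts identities, which must cancel two copies of the curvature term exactly; the $H^1$-approximation itself is routine once realized via parallel transport.
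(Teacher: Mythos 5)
Your proof is correct and is essentially the argument the paper relies on (it omits the proof, citing Lemma 2.4 of \cite{Br2}): build the broken field $Z=W\cdot\chi_{[0,\tau]}$, check via integration by parts and the initial condition that the index form of Lemma \ref{d-mat} vanishes on $Z$, then perturb by $sY$ and use nonnegativity of the quadratic in $s$ to force $\langle D_tW(\tau),Y(\tau)\rangle=0$, hence $D_tW(\tau)=0$ and $W\equiv0$ by ODE uniqueness. The only cosmetic difference is that you justify applying Lemma \ref{d-mat} to the Lipschitz field $Z$ by mollification, whereas Brendle invokes the standard extension of the index inequality to piecewise smooth fields directly; both are routine and your sign bookkeeping checks out.
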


We now give the proof of Theorem 1.1. The strategy of the proof follows the work of S. Brendle \cite{Br2} closely.

\begin{proof}[\textbf{Proof of Theorem 1.1}]
For any $r>0$ and $\bar{x}\in A_r$, let $\{e_1,\dots,e_n\}$ be an orthonormal basis of the tangent space $T_{\bar{x}}M$. Choose the geodesic normal coordinates $(x^1,\dots,x^n)$ around $\bar{x}$, such that $\frac{\partial}{\partial x^i}=e_i$ at $\bar{x}$. Let $\bar{\gamma}(t):=\exp_{\bar{x}}(tD u(\bar{x}))$ for all $t\in[0,r]$. For $1\leq i\leq n$, let $E_i(t)$ be the parallel transport of $e_i$ along $\bar{\gamma}$. For $1\leq i\leq n$, let $X_i(t)$ be the Jacobi field along $\bar{\gamma}$ with the initial conditions of $X_i(0)=e_i$ and 
$$
\langle D_tX_i(0),e_j\rangle=(D^2u)(e_i,e_j),\quad1\leq j\leq n.
$$
Let $P(t)=(P_{ij}(t))$ be a matrix defined by
$$
P_{ij}(t)=\langle X_i(t),E_j(t)\rangle, \quad
1\leq i,j\leq n.
$$
It follows from Lemma \ref{jacovani} that $X_1(t),\cdots,X_n(t)$ are linearly independent for each $t\in(0,r)$, which implies that the matrix $P(t)$ is invertible for each $t\in(0,r)$. It is obvious that $\det P(t)>0$ if $t$ is sufficiently small. Therefore, $|\det D\Phi_t(\bar{x})|=\det P(t)>0$, for $t\in[0,r)$. Let $S(t)=(S_{ij}(t))$ be a matrix defined by
$$
S_{ij}(t)=R(\bar{\gamma}'(t),E_i(t),\bar{\gamma}'(t),E_j(t)),
\quad
1\leq i,j\leq n,
$$
where $R$ denotes the Riemannian curvature tensor of $M$. By the Jacobi equation, we obtain
\begin{equation}\label{q1}
\left\{
\begin{aligned}
	&P''(t)=-P(t)S(t),\quad t\in[0,r],\\
	\\
	&P_{ij}(0)=\delta_{ij},P_{ij}'(0)=(D^2u)(e_i,e_j).
\end{aligned} \right.
\end{equation}
Let $Q(t)=P(t)^{-1}P'(t),t\in(0,r)$, which is symmetric showed by S. Brendle \cite{Br2}. By \eqref{q1}, a simple computation yields
$$
\frac{d}{dt}Q(t)=-S(t)-Q^2(t).
$$
Recalling that
$$
\mathrm{Ric}_w^{\alpha}:= \mathrm{Ric}-D^2(\log w)
-\frac{1}{\alpha}D\log w\otimes D\log w, 
$$
we follow the computation by F. Johne \cite{Jo} to derive that
$$
\begin{aligned}
	&\frac{d}{dt}[\mathrm{tr} Q+ \langle D\log w,\bar{\gamma}' \rangle]\\
	&=-\frac{1}{\alpha}\langle D\log w,\bar{\gamma}' \rangle^2- \mathrm{Ric}^w_\alpha(\bar{\gamma}',\bar{\gamma}')
	-\mathrm{tr}[Q^2]\\
	&\leq-\frac{1}{n}[\mathrm{tr} Q]^2-\frac{1}{\alpha}\langle D\log w,\bar{\gamma}' \rangle^2
	- \mathrm{Ric}^w_\alpha(\bar{\gamma}',\bar{\gamma}')\\
	&=-\frac{1}{n+\alpha}(\mathrm{tr} Q+ \langle D\log w	,\bar{\gamma}' \rangle)^2
	-\frac{n}{\alpha(n+\alpha)}\Big(\frac{\alpha}{n}\mathrm{tr} Q-	\langle D\log w,\bar{\gamma}' \rangle\Big)^2
	-\mathrm{Ric}^w_\alpha(\bar{\gamma}',\bar{\gamma}')\\
	&\leq-\frac{1}{n+\alpha}(\mathrm{tr} Q+ \langle D\log w,\bar{\gamma}' \rangle)^2
	-\mathrm{Ric}^w_\alpha(\bar{\gamma}',\bar{\gamma}').
\end{aligned}
$$
Set $g=\frac{1}{n+\alpha}[\mathrm{tr} Q+ \langle D(\log w),\bar{\gamma}'(t) \rangle]$. The assumption of asymptotic nonnegative Bakry-\'Emery Ricci curvature gives
\begin{equation}\label{q2}
	g'+g^2\leq\frac{n+\alpha-1}{n+\alpha}|D u(\bar{x})|^2\lambda(d(o,\bar{\gamma}(t))),
\end{equation}
where $o$ is the base point. By the triangle inequality, we get
\begin{equation}\label{q3}
d(o,\bar{\gamma}(t))\geq|d(o,\bar{x})-d(\bar{x},\bar{\gamma}(t))|
=|d(o,\bar{x})-t|D u(\bar{x})||.
\end{equation}
Set
\begin{equation*}
\Lambda_{\bar{x}}(t)=\frac{n+\alpha-1}{n+\alpha}|D u(\bar{x})|^2
\lambda(|d(o,\bar{x})-t|D u(\bar{x})||).
\end{equation*}
Since $\lambda$ is a nonincreasing function, it follows from \eqref{q1}, \eqref{q2} and \eqref{q3} that
$$
\left\{
\begin{aligned}
	&g'(t)+g(t)^2\leq \Lambda_{\bar{x}}(t), \quad t\in(0,r),\\
	\\
	&g(0)=\frac{1}{n+\alpha}[\Delta u(\bar{x})+\langle D(\log w)(\bar{x}),Du(\bar{x})\rangle].
\end{aligned} \right.
$$
Let $\phi=e^{\int_0^tg(\tau)d\tau}$, then 
\begin{equation}\label{q4}
\left\{
\begin{aligned}
	&\phi''\leq \Lambda_{\bar{x}}(t)\phi, \quad t\in(0,r),\\
	\\
	&\phi(0)=1,\phi'(0)=g(0).
\end{aligned} \right.
\end{equation}
Set $\psi_1,\psi_2$ be solutions of the following problems
\begin{equation}\label{q5}
\left\{
\begin{aligned}
	&\psi_1''= \Lambda_{\bar{x}}(t)\psi_1 ,\quad t\in(0,r),\\
	\\
	&\psi_1(0)=0,\psi_1'(0)=1,
\end{aligned} \right.
\quad
\left\{
\begin{aligned}
	&\psi_2''= \Lambda_{\bar{x}}(t)\psi_2 ,\quad t\in(0,r),\\
	\\
	&\psi_2(0)=1,\psi_2'(0)=0.
\end{aligned} \right.
\end{equation}
By the assumption of \eqref{b0}, one knows that $\int_0^{\infty}\Lambda_{\bar{x}}(t)~dt<\infty$. Similar to the proof of Lemma 2.6 in \cite{DLL1}, we have
\begin{equation}\label{psi}
\frac{\psi_2}{\psi_1}(r)\leq \int_0^{+\infty}\Lambda_{\bar{x}}(t)\ dt+\frac{1}{r}
\leq2\frac{n+\alpha-1}{n+\alpha}b_1|Du(\bar{x})|+\frac{1}{r}.
\end{equation}
Noting that $|Du(\bar{x})|<1$, then
\begin{equation}\label{q6}
	\frac{\psi_2}{\psi_1}(r)\leq2\frac{n+\alpha-1}{n+\alpha}b_1+\frac{1}{r}.
\end{equation}
By Lemma 2.13 in \cite{PRS} and \eqref{q5}, we derive that
\begin{equation}\label{comall}
	\begin{aligned}
		\psi_1(t)&\leq 
		\int_0^te^{\int_0^s \tau\Lambda_{\bar{x}}(\tau)~d\tau}~ds
		\leq te^{\int_0^\infty \tau\Lambda_{\bar{x}}(\tau)~d\tau}
		\\
		&=te^{\frac{n+\alpha-1}{n+\alpha}\int_0^\infty v\lambda(|d(o,\bar{x})-v|)~dv}
		\leq te^{\frac{n+\alpha-1}{n+\alpha}(2r_0b_1+b_0)},
	\end{aligned}
\end{equation}
where $r_0=\max\{d(o,x)|x\in\Omega\}$. 

Letting $\psi(t)=\psi_2(t)+g(0)\psi_1(t)$, using \eqref{q4}, \eqref{q5} and Lemma 2.5 in \cite{DLL1}, we obtain
$$
\frac{1}{n+\alpha}[\mathrm{tr}Q+\langle D\log w,\bar{\gamma}'\rangle]=\frac{\phi'}{\phi}\leq\frac{\psi'}{\psi},\quad\forall t\in(0,r).
$$
Consequently,
\begin{equation}\label{q7}
	\frac{d}{dt}\log[w(\bar{\gamma}(t))\det P(t)]=\mathrm{tr}Q(t)+\langle D\log w(\bar{\gamma}(t)),\bar{\gamma}'(t)\rangle\leq (n+\alpha)\frac{\psi'}{\psi}.
\end{equation}
Through \eqref{q7}, we can get
$$
\begin{aligned}
	&w(\Phi_t(\bar{x}))|\det D\Phi_t(\bar{x})|=w(\Phi_t(\bar{x}))\det P(t)\\
	&\leq w(\bar{x})\Big(\psi_2(t)+\frac{1}{n+\alpha}[\Delta u(\bar{x})+\langle D\log w(\bar{x}),Du(\bar{x})\rangle]\psi_1(t)\Big)^{n+\alpha}
\end{aligned}
$$
for all $t\in[0,r]$. This implies
$$
w(\Phi_r(\bar{x}))|\det D\Phi_r(\bar{x})|\leq w(\bar{x})\Big(\frac{\psi_2(r)}{\psi_1(r)}+g(0)\Big)^{n+\alpha}\psi^{n+\alpha}_1(r)
$$
for any $\bar{x}\in A_r$. Using \eqref{q6}, \eqref{comall} and Lemma \ref{hessu}, it follows that
\begin{equation}\label{q8}
\begin{aligned}
	&w(\Phi_r(\bar{x}))|\det D\Phi_r(\bar{x})|
	\\
	&\leq w(\bar{x})\Big(\frac{n+\alpha-1}{n+\alpha}2b_1+\frac{1}{r}+
	\frac{1}{n+\alpha}[\Delta u(\bar{x})+\langle D(\log w)(\bar{x}),Du(\bar{x})\rangle]\Big)^{n+\alpha}\\&\quad\cdot
	r^{n+\alpha}e^{(n+\alpha-1)(2r_0b_1+b_0)}\\
	&\leq w(\bar{x})\Big(\frac{1}{r}+f^\frac{1}{n+\alpha-1}(\bar{x})\Big)^{n+\alpha}
	r^{n+\alpha}e^{(n+\alpha-1)(2r_0b_1+b_0)}
\end{aligned}
\end{equation}
for any $\bar{x}\in A_r$. Moreover, by \eqref{h}, we obtain $h(t)\geq t$ and
\begin{equation}\label{h1234}
	\lim_{t\to\infty}h'(t)=1+\int_0^\infty h(s)\lambda(s)~ds\geq1+\int_0^\infty s\lambda(s)~ds
	=1+b_0.
\end{equation}
Combining Lemma \ref{trans}, \eqref{q8} with the formula for change of variables in multiple integrals, we conclude that
\begin{equation}\label{jfbh} 
\begin{aligned}
	&\int_{\{q\in M:d(x,q)<r\text{ for all }x\in\Omega \}}w ~d\mathrm{vol}_g(x)\\
	&\leq\int_{A_r}|\det D\Phi_r(\bar{x})|w(\Phi_r(\bar{x})) ~d\mathrm{vol}_g(\bar{x})\\
	&\leq\int_{A_r}\Big(\frac{1}{r}+f^\frac{1}{n+\alpha-1}(\bar{x})\Big)^{n+\alpha}
	r^{n+\alpha}e^{(n+\alpha-1)(2r_0b_1+b_0)}w(\bar{x}) ~d\mathrm{vol}_g(\bar{x}).
\end{aligned}
\end{equation}
Let $r>r_0$, the triangle inequality implies that
\begin{equation}\label{inclusion}
	B_{r-r_0}(o)\subset\{q\in M:d(x,q)<r\text{ for all }x\in\Omega \}
	\subset B_{r+r_0}(o).
\end{equation}
Using $\lim\limits_{r\to\infty}\frac{\int_0^{r-r_0}h(t)^{n+\alpha-1}~dt}{\int_0^{r}h(t)^{n+\alpha-1}~dt}=\lim\limits_{r\to\infty}\frac{h(r-r_0)^{n+\alpha-1}}{h(r)^{n+\alpha-1}}$ by the L'Hospital's rule, and combining \eqref{avr-be}, \eqref{inclusion} with Lemma 2.7 in \cite{DLL1}, we have
	\begin{align*}
		\mathcal{V}_\alpha&=\mathcal{V}_\alpha\lim_{r\to\infty}
		\frac{h(r-r_0)^{n+\alpha-1}}{h(r)^{n+\alpha-1}}
		\\
		&=\lim_{r\to\infty}
		\frac{\int_{B_{r-r_0}(o)}w~d\mathrm{vol}_g}{(n+\alpha)\int_0^{r-r_0}h(t)^{n+\alpha-1}~dt}
		\frac{\int_0^{r-r_0}h(t)^{n+\alpha-1}~dt}{\int_0^{r}h(t)^{n+\alpha-1}~dt}
		\\
		&\leq\lim_{r\to\infty}\frac{\int_{\{q\in M:d(x,q)<r\text{ for all }x\in\Omega \}}w~d\mathrm{vol}_g}
		{(n+\alpha)\int_0^rh(t)^{n+\alpha-1}~dt}\\
		&\leq	\lim_{r\to\infty}\frac{\int_{B_{r+r_0}(o)}w~d\mathrm{vol}_g}{(n+\alpha)\int_0^{r+r_0}h(t)^{n-1}~dt}
		\frac{\int_0^{r+r_0}h(t)^{n+\alpha-1}~dt}{\int_0^{r}h(t)^{n+\alpha-1}~dt}	\\
		&=\mathcal{V}_\alpha\lim_{r\to\infty}
		\frac{h(r+r_0)^{n+\alpha-1}}{h(r)^{n+\alpha-1}}\\
		&=\mathcal{V}_\alpha,
	\end{align*}
which implies that
\begin{equation}\label{avr-v}
	\mathcal{V}_\alpha=\lim_{r\to\infty}\frac{\int_{\{q\in M:d(x,q)<r\text{ for all }x\in\Omega \}}w~d\mathrm{vol}_g}
	{(n+\alpha)\int_0^rh(t)^{n+\alpha-1}~dt}.
\end{equation}
Dividing both side of \eqref{jfbh} by $(n+\alpha)\int_0^rh(t)^{n+\alpha-1}dt$ and letting $r\to\infty$, using \eqref{h1234} and \eqref{avr-v}, one can find that
$$
\begin{aligned}
	\mathcal{V}_\alpha&\leq e^{(n+\alpha-1)(2r_0b_1+b_0)}\int_\Omega wf^{\frac{n+\alpha}{n+\alpha-1}}
	\lim_{r\to\infty}\frac{r^{n+\alpha}}{(n+\alpha)\int_0^rh(t)^{n+\alpha-1}~dt}
	\\
	&=e^{(n+\alpha-1)(2r_0b_1+b_0)}\int_\Omega wf^{\frac{n+\alpha}{n+\alpha-1}}\lim_{r\to\infty}\frac{1}{h'(t)^{n+\alpha-1}}
	\\
	&\leq\Big(\frac{e^{2r_0b_1+b_0}}{1+b_0}\Big)^{n+\alpha-1}\int_\Omega
	wf^{\frac{n+\alpha}{n+\alpha-1}}.
\end{aligned}
$$
Under our scaling assumption \eqref{scale}, we obtain
$$
\begin{aligned}
	&\int_{\partial\Omega}wf+\int_{\Omega}w|Df|+(n+\alpha-1)2b_1\int_\Omega wf=(n+\alpha)
	\int_{\Omega}wf^{\frac{n+\alpha}{n+\alpha-1}}\\
	&\geq(n+\alpha)\mathcal{V}_\alpha^{\frac{1}{n+\alpha}}
	\Big(\frac{1+b_0}{e^{2r_0b_1+b_0}}\Big)^{\frac{n+\alpha-1}{n+\alpha}}
	\Big(\int_\Omega wf^{\frac{n+\alpha}{n+\alpha-1}}\Big)^{\frac{n+\alpha-1}{n+\alpha}}.
\end{aligned}
$$
\end{proof}

\section*{Acknowledgements}

We would like to thank referees for their helpful comments and valuable suggestions.

\bibliographystyle{plain}

\bibliography{BEsobo}
\newpage

\noindent\mbox{Yuxin Dong and Lingen Lu} \\
\mbox{School of Mathematical Sciences}\\
\mbox{220 Handan Road, Yangpu District}\\
\mbox{Fudan University}\\
\mbox{Shanghai, 200433}\\
\mbox{P.R. China}\\
\mbox{\textcolor{blue}{yxdong@fudan.edu.cn}}\\
\mbox{\textcolor{blue}{lulingen@fudan.edu.cn}}


~

\noindent\mbox{Hezi Lin}\\
\mbox{School of Mathematics and Statistics \& Key Laboratory of Analytical Mathematics}\\
\mbox{and Applications (Ministry of Education) \& FJKLAMA}\\
\mbox{Fujian Normal University}\\
\mbox{Fuzhou,  350108}\\
\mbox{P.R. China}\\
\mbox{\textcolor{blue}{ lhz1@fjnu.edu.cn}}

\end{document}